\newtheorem{algorithm}{Algorithm} 
\DeclareMathOperator{\real}{Re}
\DeclareMathOperator{\divv}{div}
\newcommand{\e}{\mathrm{e}}
\renewcommand{\i}{\mathrm{i}}
\newcommand{\pair}[1]{\langle #1 \rangle}
\newcommand{\inner}[1]{\langle\!\langle #1 \rangle\!\rangle}
\providecommand{\abs}[1]{\lvert#1\rvert}
\providecommand{\vect}[1]{\boldsymbol{#1}}
\newcommand{\eps}{\varepsilon}
\newcommand{\trans}{\top}
\newcommand{\uud}{\, \mathrm{d}}
\newcommand{\ud}{\mathrm{d}}
\newcommand{\pd}{\partial}
\newcommand{\R}{{\mathbb R}}
\newcommand{\C}{{\mathbb C}}
\newcommand{\D}{\mathbb{D}}
\newcommand{\Diff}{\mathrm{Diff}}
\newcommand{\Emb}{\mathrm{Emb}}
\newcommand{\Xcal}{\mathfrak{X}}
\newcommand{\Fcal}{\mathcal{F}}
\newcommand{\LieD}{\pounds}
\DeclareMathOperator{\ad}{ad}
\providecommand{\SL}{\mathrm{SL}}
\newcommand{\Acal}{\mathcal{A}}
\newcommand{\con}{\mathrm{c}}
\newcommand{\Xcalcon}{{\Xcal_\con}}
\newcommand{\Con}{{\mathrm{Con}}}
\newcommand{\set}[1]{\mathsf{#1}}
\newcommand{\Id}{\mathrm{Id}}
\begin{document}

\pagestyle{headings}
\mainmatter

\title{Geodesic Warps by Conformal Mappings} 

\author{Stephen Marsland\thanks{\url{s.r.marsland@massey.ac.nz}}\inst{1}, Robert I McLachlan\thanks{\url{r.mclachlan@massey.ac.nz}}\inst{2}, Klas Modin\thanks{\url{klas.modin@chalmers.se}}\inst{2}, and 
Matthew Perlmutter\thanks{\url{matthew@mat.ufmg.br}}\inst{2}}
\authorrunning{Marsland, McLachlan, Modin, Perlmutter}
\institute{
School of Engineering and Advanced Technology (SEAT) \\
Massey University, 
Palmerston North, 
New Zealand
\and
Institute of Fundamental Sciences (IFS) \\
Massey University, 
Palmerston North, 
New Zealand
}

\maketitle

\begin{abstract}
	In recent years there has been considerable interest in methods for diffeomorphic warping of images, with applications e.g.\ in medical imaging and evolutionary biology.
	The original work generally cited is that of the evolutionary biologist D'Arcy Wentworth Thompson, who demonstrated warps to deform images of one species into another.
	However, unlike the deformations in modern methods, which are drawn from the full set of diffeomorphism, he deliberately chose lower-dimensional sets of transformations, such as planar conformal mappings. 
	
	In this paper we study warps of such conformal mappings. 
	The approach is to equip the infinite dimensional manifold of conformal embeddings with a Riemannian metric, and then use the corresponding geodesic equation in order to obtain diffeomorphic warps. 
	After deriving the geodesic equation, a numerical discretisation method is developed.
	Several examples of geodesic warps are then given.
	We also show that the equation admits totally geodesic solutions corresponding to scaling and translation, but not to affine transformations. 
\end{abstract}

\section{Introduction}

The use of diffeomorphic transformations in both image registration and shape analysis is now common and utilised in many machine vision and image analysis tasks. 
One image or shape is brought into alignment with another by deforming the image until some similarity measure, such as sum-of-squares distance between pixels in the two images, reaches a minimum. 
The deformation is computed as a geodesic curve with respect to some metric on the diffeomorphism group. 
For a general treatment and an overview of the subject see the monograph by Younes~\cite{Yo2010} and references therein.

The standard approach to the deformation method is to first perform an affine registration
(principally to remove translation and rotation), and then to seek a geodesic warp of the
image in the full set of diffeomorphisms of a fixed domain. Typically the setting 
is to use the right-invariant $H^1_\alpha$ metric,
which leads to the so-called EPDiff equation (see e.g.~\cite{HoMa2005}).
However, in what is arguably the most influential demonstration of the application of
warping methods -- the evolutionary biologist 
D'Arcy Wentworth Thompson's seminal book `On Growth and Form'~\cite{Th1942} --
Thompson warps images of one biological species into another using relatively simple types of transformations, 
so that the gross features of the two images match.
In a recent review of his work, biologist Arthur Wallace says:

\vspace{1ex}

{\em ``This theory cries out for causal explanation, which is something the great man eschewed. 
{\normalfont [\dots\!]}
His transformations suggest coordinated rather than piecemeal changes to development 
in the course of evolution, an issue which almost completely disappeared from view in 
the era of the `modern synthesis' of evolutionary theory, but which is of central 
importance again in the era of evo-devo. {\normalfont [\dots\!]}
All the tools are now in place to examine the mechanistic basis of transformations. 
Not only do we have phylogenetic systematics and evo-devo, but, so obvious that it
is easy to forget, we have computers, and especially, 
in this context, advanced computer graphics.
We owe it to the great man to put these three things together to investigate the 
mechanisms that produce the morphological changes that he captured so elegantly 
with little more than sheets of graph paper and, 
of course, a brilliant mind.''}~\cite{Wa2006b}

\vspace{1ex}

\begin{table}
	\begin{center}
		\begin{tabular}{p{0.57\textwidth}l}
			\toprule 
			\bf Figure no.\ in \cite{Th1942} & \bf Transformation group \\
			\midrule
			515 & $x\mapsto ax,\; y\mapsto y $ \\
			513.2 & $x\mapsto ax,\; y \mapsto b y$ \\ 
			509, 510, 518 & $x\mapsto ax,\; y\mapsto cx + dy$ (shears) \\
			 521--22, 513.5 & $x\mapsto ax+by,\; y\mapsto cx+dy$ (affine) \\
			506, 508 & $x\mapsto ax,\; y\mapsto g(y)$ \\
			511 & $x\mapsto f(x),\; y\mapsto g(y)$ \\
			517--20, 523, 513.1, 513.3, 513.4, 513.6, 514, 525 & conformal \\
			524 & `peculiar' \\
			\bottomrule
		\end{tabular}
	\end{center}
	\caption{Transformation groups used in some transformations in Chapter XII, `On the 
	Theory of Transformations, or the Comparison of Related Forms', of~\cite{Th1942}.}
\end{table}

We draw attention to two key aspects of Thompson's examples: (i) the transformations are as simple as
possible to achieve what he considers a good enough match (see Table 1); and (ii) the classes of
transformations that he considers all forms groups (or pseudogroups), either finite or infinite
dimensional. Mostly, he uses conformal transformations, a constraint he is reluctant to give
up:

\vspace{1ex}

{\em ``It is true that, in a mathematical sense, it is not a perfectly satisfactory or perfectly
regular deformation, for the system is no longer isogonal; but {\normalfont [\dots\!]} 
approaches to an isogonal system under
certain conditions of friction or constraint.''} \cite[p.~1064]{Th1942}

{\em `` {\normalfont [\dots\!]} it will perhaps be
noticed that the correspondence is not always quite accurate in small details. It could easily have been
made much more accurate by giving a slightly sinuous curvature to certain of the coordinates. But as they
stand, the correspondence indicated is very close, and the simplicity of the figures illustrates all the
better the general character of the transformation.''} \makebox{[ibid., p.~1074]}

\vspace{1ex}

For applications in image registration we therefore suggest to vary the group of transformations from which warps are drawn.
If a low dimensional group gives a close match, then it should be preferred over a similar match from a higher-dimensional group. 
If necessary, local deformations from the full diffeomorphism group can be added later to account for fine details. 
In this paper we consider the case of conformal transformations.
More precisely, we consider the problem of formulating and solving a geodesic equation on the space of conformal mappings.
This is a fundamental sub-task in the framework of \emph{large deformation diffeomorphic metric mapping} (LDDMM)~\cite{Tr1995,DuGr1998,Tr1998,JoMi2000,MiYo2001,Be2003,BeMiTrYo2005,BrGaHoRa2011}, which is the standard setup for diffeomorphic image registration.
Based on the geodesic equation derived in this paper, the full conformal image registration problem will be considered in future work.





Although the composition of two conformal maps is conformal, it need not be invertible: we need to restrict
the domain. The invertible conformal maps from the disk to itself do form a group, the disk-preserving
M\"{o}bius group, but it is only 3 dimensional. We are therefore led to consider the infinite dimensional
configuration space $\Con(\set{U},\R^2)$ of conformal embeddings of a simply connected 
compact domain~$\set{U}\subset \R^2$
into the plane. This set is not a group, but it is a pseudogroup.

In~\cite{MaMcMoPe2011} the authors study a geodesic equation
using an $L^2$--metric on the infinite dimensional manifold of conformal embeddings and 
a numerical method is developed for the initial value problem, 
based on the reproducing Bergman kernel. Using numerical examples,
it is shown that the geodesic equation is ill-conditioned as an initial value problem,
and that cusps are developed in finite time which leads to a break-down of the dynamics. 

In this paper we continue the study of geodesics on the manifold of conformal embeddings,
but now with respect to a more general class of Sobolev type $H^1_\alpha$ metrics.
Furthermore, we develop a new numerical algorithm for solving the equations.
The new method is based on a discrete variational principle, and directly solves
the two point boundary value problem.
Our new numerical method
behaves well, i.e., converges fast, for all the examples we tried
as long as the distance between the initial and final point on the manifold
is not too large. From this observation we expect that 
the initial value problem with $\alpha >0$ is well-posed in the
$H^s$ Banach space topology, which would imply that the Riemann exponential
is a local diffeomorphism (see~\cite{La1999,EbMa1970,Sh1998}). This question will
be investigated in detail in future work, as it is out of the scope of the current paper.
The experimental results in this paper (\autoref{sec:results}) are limiting to confirming that the discrete Lagrangian method can reproduce the known geodesics consisting of linear conformal maps and can also calculate non-linear geodesics with moderately large deformations. 
This is a first step towards exploring the metric geometry of the conformal embeddings, similar to what was done for metrics on planar curves by Michor and Mumford \cite{MiMu2006}.

For analysis of 2D shapes, a setting using conformal mappings is developed 
in~\cite{ShMu2006}. There it is shown that the space of planar shapes
is isomorphic to the quotient space $\Diff(S^1)/P\SL(2,\R)$, where
$P\SL(2,\R)$ acts on $\Diff(S^1)$ by right composition of its corresponding disk preserving
Möbius transformation restricted to~$S^1$. Furthermore it is shown
that there is a natural metric on $\Diff(S^1)/P\SL(2,\R)$, the Weil-Peterson metric,
which has non-positive sectional curvature. The setting in this paper is related but
different: rather than studying planar shapes we study conformal transformations between
planar domains and we think of the manifold of conformal embeddings as a submanifold
of the full space of planar embeddings. 
The equation we obtain can be seen as a generalisation and a restriction of the EPDiff equation.
First, a generalisation by going from the group of diffeomorphisms of a fixed domain to the manifold of embeddings from a planar domain into the entire plane.\footnote{
	This generalisation of EPDiff has not yet been worked out in detail in the literature.
	However, it is likely that the approach developed in~\cite{GaMaRa2012} for free boundary flow can be used with only minor modifications. 
}
\footnote{
	One can also look at the generalisation of EPDiff to embeddings from a Klein geometry perspective.
	Indeed, let $\Diff_{\set U}(\R^2)$ denote the diffeomorphisms that leaves the domain $\set U$ invariant.
	Then the embeddings $\Emb(\set U,\R^2)$ can be identified with the space of co-sets $\Diff(\R^2)/\Diff_{\set U}(\R^2)$.
}
Second, a restriction by restricting to conformal embeddings.
The approach we take is similar to (and much influenced by) the recent paper~\cite{GaMaRa2012}, in which a geometric framework for moving boundary continuum equations in physics is developed.



\section{Mathematical Setting and Choice of Metric} 
\label{sec:choice_of_metric}

The linear space of smooth maps $\set U \to \R^2$ is denoted $C^\infty(\set U,\R^2)$.
Recall that this space is a Fréchet space, i.e., it has a topology defined by
a countable set of semi-norms (see~\cite[Sect.~I.1]{Ha1982} for details on the Fréchet topology used).
The full set of embeddings $\set U \to \R^2$, denoted $\Emb(\set U,\R^2)$, is a open
subset of $C^\infty(\set U,\R^2)$. In particular, this implies that~$\Emb(\set U,\R^2)$ is a 
Fréchet manifold (see~\cite[Sect.~I.4.1]{Ha1982}). 

Since $\set U\subset\R^2$ 
it holds that $\Emb(\set U,\R^2)$
contains the identity mapping on~$\set U$, which we denote by~$\Id$.
The tangent space $T_\Id\Emb(\set U,\R^2)$ at the identity
is given by the smooth vector fields on~$\set U$, which we denote by~$\Xcal(\set U)$.
Notice that the vector fields need not be tangential to the boundary $\pd U$.
Also notice that $\Xcal(\set U)$ is a Fréchet Lie algebra
with bracket given by minus the Lie derivative on vector fields,
i.e., if $\xi,\eta\in\Xcal(\set U)$, then $\ad_\xi(\eta) = -\LieD_\xi\eta$.

Let $\mathsf{g} = \ud x\otimes \ud x + \ud y \otimes \ud y$ be the standard
Euclidean metric on~$\R^2$. 
Consider the subspace of $C^\infty(\set U,\R^2)$ 
consisting of maps that preserve the metric up to multiplication 
with elements in the space $\Fcal(\set U)$ of 
smooth real valued functions on~$\set U$. That is, the subspace
\[
C^\infty_\con(\set U,\R^2) = 
\{ \phi\in C^\infty(\set U,\R^2); \phi^*\mathsf{g}= F \mathsf{g}, F\in\Fcal(\set U) \} .
\]
This subspace 
is topologically closed in $C^\infty(\set U,\R^2)$. The set of conformal
embeddings $\Con(\set U,\R^2) = \Emb(\set U,\R^2)\cap C^\infty_\con(\set U,\R^2)$
is an open subset of $C^\infty_\con(\set U,\R^2)$ and 
a Fréchet submanifold of $\Emb(\set U,\R^2)$.
The tangent space $T_\Id \Con(\set U,\R^2)$ is given by
\begin{equation*}
	\Xcalcon(\set U) = \{ \xi \in \Xcal(\set U); \LieD_\xi\mathsf{g} = \divv(\xi)\mathsf{g} \} ,
\end{equation*}
which follows by straightforward calculations. Notice that
$\Xcalcon(\set U)$ is a subalgebra of $\Xcal(\set U)$,
since 
\begin{equation*}
	\begin{split}
		\LieD_{\LieD_\xi\eta} \mathsf{g} &= \LieD_\xi\LieD_\eta\mathsf{g} - \LieD_\eta\LieD_\xi\mathsf{g}
		= \LieD_\xi (\divv(\eta)\mathsf{g}) - \LieD_\eta(\divv(\xi)\mathsf{g})
		\\
		&= \big(\LieD_\xi \divv(\eta) - \LieD_\eta \divv(\xi)\big)\mathsf{g}
		+ \underbrace{\divv(\eta)\LieD_\xi\mathsf{g} - \divv(\xi)\LieD_\eta\mathsf{g}}_{0}
		\\
		&= \divv(\LieD_\xi\eta)\mathsf{g} .
	\end{split}
\end{equation*}

In the forthcoming, we identify the plane $\R^2$ with the complex numbers $\C$
through $(x,y)\mapsto z = x + \i y$.
Hence, the vector fields $\Xcal(\set U)$ are identified with smooth complex valued functions
on~$\set U$, and $\Xcalcon(\set U)$ with the space of holomorphic functions. 

The complex $L^2$ inner product on $\Xcal(\set U)$ is given by
\begin{equation*}
	\inner{\xi,\eta}_{L^2(\set U)} := \int_{\set U} \xi(z)\overline{\eta(z)}\uud A(z) ,
\end{equation*}
where $\ud A = \ud x\wedge \ud y$ is the canonical volume form on~$\R^2$.
Correspondingly, we also have the real $L^2$ inner product given by
\begin{equation*}
	\pair{\xi,\eta}_{L^2(\set U)} := \int_{\set U} \mathsf{g}(\xi,\eta)\ud A = 
	\real \inner{\xi,\eta}_{L^2(\set U)} .
\end{equation*}
Also, we have the more general class of real and complex $H^1_\alpha$ inner products
given by
\begin{equation*}
	\begin{split}
		\pair{\xi,\eta}_{H^1_\alpha(\set U)} & := \pair{\xi,\eta}_{L^2(\set U)} + 
		\frac{\alpha}{2} \pair{\xi_x,\eta_x}_{L^2(\set U)}
		+ \frac{\alpha}{2}\pair{\xi_y,\eta_y}_{L^2(\set U)} , \\
		\inner{\xi,\eta}_{H^1_\alpha(\set U)} & := \inner{\xi,\eta}_{L^2(\set U)} + 
		\frac{\alpha}{2} \inner{\xi_x,\eta_x}_{L^2(\set U)}
		+ \frac{\alpha}{2} \inner{\xi_y,\eta_y}_{L^2(\set U)} ,
	\end{split}
\end{equation*}
where $\alpha \geq 0$ and $\xi_x,\xi_y$ respectively denotes derivatives
with respect to the Cartesian coordinates~$(x,y)$.
Notice that if $\xi,\eta\in\Xcalcon(\set U)$, then
\begin{equation*}
	\inner{\xi,\eta}_{H^1_\alpha(\set U)} = \inner{\xi,\eta}_{L^2(\set U)}
	+ \alpha \inner{\xi',\eta'}_{L^2(\set U)}
\end{equation*}
where $\xi'$ and $\eta'$ denote complex derivatives.


The class of inner products $\inner{\cdot,\cdot}_{H^1_\alpha(\set U)}$ on
$T_\Id\Emb(\set U,\R^2) = \Xcal(\set U)$ induces
a corresponding class of Riemannian
metrics on~$\Emb(\set U,\R^2)$ by
\begin{equation}\label{eq:Halpha_metric}
	T_\varphi\Emb(\set U,\R^2)\times T_\varphi\Emb(\set U,\R^2) \ni
	(U,V) \mapsto \pair{U\circ\varphi^{-1},V\circ\varphi^{-1}}_{H^1_\alpha(\varphi(\set U))} .
\end{equation}
Note that $\varphi^{-1}$ is well-defined as a map $\varphi(\set U)\to\set U$
since $\varphi$ is an embedding.
Also note that the restriction of the metric~\eqref{eq:Halpha_metric} to
the submanifold of diffeomorphisms $\Diff(\set U)\subset \Emb(\set U,\R^2)$ 
yields to the ``ordinary'' $H^1_\alpha$ metric
on~$\Diff(\set U)$ corresponding to the $\mathrm{EPDiff}$~equations.



\section{Derivation of the Geodesic Equation}

In this section we derive the geodesic equation on $\Con(\set U,\R^2)$
for the class of $H^1_\alpha(\set U)$~metrics given by~\eqref{eq:Halpha_metric}.
These equation are given by the Euler-Lagrange equations with respect to
the quadratic Lagrangian on $\Con(\set U,\R^2)$ given by
\begin{equation}\label{eq:HalphaLagrangian}
	L(\varphi,\dot\varphi) = 
	\frac{1}{2}\pair{\dot\varphi\circ\varphi^{-1},\dot\varphi\circ\varphi^{-1}}_{H^1_\alpha(\varphi(\set U))}
	= \frac{1}{2}\inner{\dot\varphi\circ\varphi^{-1},\dot\varphi\circ\varphi^{-1}}_{H^1_\alpha(\varphi(\set U))} ,
\end{equation}
where $\dot\varphi\in T_\varphi\Emb(\set U,\R^2)$ corresponds to the time derivative.

As a first step, we have the following result.

\begin{lemma}\label{lem:metric_on_U}
	For any $(\varphi,\dot\varphi)\in T\Emb(\set U,\R^2)$ it holds that
	\begin{equation*}
		\inner{\dot\varphi\circ\varphi^{-1},\dot\varphi\circ\varphi^{-1}}_{H^1_\alpha(\varphi(\set U))}
		= \inner{\varphi'\dot\varphi,\varphi'\dot\varphi}_{L^2(\set U)}
		+ \alpha \inner{\dot\varphi',\dot\varphi'}_{L^2(\set U)} .
	\end{equation*}
\end{lemma}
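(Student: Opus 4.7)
The plan is to establish the identity by pulling back the integrals on $\varphi(\set U)$ to integrals on $\set U$ via the conformal change of variables $z = \varphi(w)$. Since the statement is only meaningful when the complex derivatives $\varphi'$ and $\dot\varphi'$ are defined, I am reading the hypothesis as pertaining to conformal embeddings, so that $\varphi$ is holomorphic with nowhere vanishing $\varphi'$ and $\dot\varphi$ is a holomorphic tangent vector.

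First I would split the left-hand side using the expression derived earlier for the $H^1_\alpha$ inner product of holomorphic vector fields, namely
\[
\inner{u,u}_{H^1_\alpha(\varphi(\set U))} = \inner{u,u}_{L^2(\varphi(\set U))} + \alpha \inner{u',u'}_{L^2(\varphi(\set U))},
\]
where $u := \dot\varphi\circ\varphi^{-1}$ is the Eulerian velocity on $\varphi(\set U)$. Note that $u$ is holomorphic because both $\dot\varphi$ and $\varphi^{-1}$ are.

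Next I would treat each term separately. For the $L^2$ term, the change of variables $z=\varphi(w)$ has real Jacobian $|\varphi'(w)|^2$ (because $\varphi$ is conformal), so
\[
\int_{\varphi(\set U)} \lvert u(z)\rvert^2 \uud A(z) = \int_{\set U} \lvert \dot\varphi(w)\rvert^2 \lvert\varphi'(w)\rvert^2 \uud A(w) = \inner{\varphi'\dot\varphi,\varphi'\dot\varphi}_{L^2(\set U)}.
\]
For the derivative term, the chain rule for complex derivatives gives $u'(z) = \dot\varphi'(w)\cdot(\varphi^{-1})'(z) = \dot\varphi'(w)/\varphi'(w)$ with $w=\varphi^{-1}(z)$. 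Applying the same change of variables, the factor $\lvert\varphi'\rvert^{-2}$ from $\lvert u'\rvert^2$ cancels precisely against the Jacobian $\lvert\varphi'\rvert^2$, leaving
\[
\int_{\varphi(\set U)} \lvert u'(z)\rvert^2 \uud A(z) = \int_{\set U} \lvert \dot\varphi'(w)\rvert^2 \uud A(w) = \inner{\dot\varphi',\dot\varphi'}_{L^2(\set U)}.
\]
Combining these two identities and multiplying the second by $\alpha$ yields the claim.

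The only real subtlety is the second step: one must recognise that, thanks to conformality, the Jacobian of the change of variables exactly matches the pointwise modulus of $(\varphi^{-1})'$ squared. Everything else is bookkeeping. Since $\varphi$ is an embedding, $\varphi^{-1}:\varphi(\set U)\to\set U$ is smooth and $\varphi'$ is nonvanishing, so all the manipulations are justified without further regularity hypotheses.
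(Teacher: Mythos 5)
Your proof is correct and takes essentially the same route as the paper's: the conformal change of variables with Jacobian $\abs{\varphi'}^2$ handles the $L^2$ term, and the chain rule $(\dot\varphi\circ\varphi^{-1})' = (\dot\varphi'/\varphi')\circ\varphi^{-1}$ makes the Jacobian cancel exactly in the derivative term. Your explicit remark that the identity presupposes $\varphi$ conformal and $\dot\varphi$ holomorphic (so that the $H^1_\alpha$ product reduces to the complex-derivative form) is a reasonable clarification of the hypothesis, which the paper leaves implicit.
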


\begin{proof}
	Let $f$ be any complex valued function on $\set U$.
	By a change of variables $w = \varphi(z)$ we obtain
	\begin{equation*}
		\int_{\varphi(\set U)} f\circ\varphi^{-1}(w) \ud A(w) = \int_{\set U} f(z) \abs{\varphi'(z)}^2\ud A(z).
	\end{equation*}
	For the first term we take $f(z) = \dot\varphi(z)\overline{\dot\varphi(z)}$
	which yields
	\begin{equation*}
		\inner{\dot\varphi\circ\varphi^{-1},\dot\varphi\circ\varphi^{-1}}_{L^2(\varphi(\set U))}
		= \int_{\set U} \dot\varphi(z)\overline{\dot\varphi(z)} \abs{\varphi'(z)}^2\ud A(z)
		= \inner{\varphi'\dot\varphi,\varphi'\dot\varphi}_{L^2(\set U)}.
	\end{equation*}
	For the second term we take first notice that
	\begin{equation*}
		(\dot\varphi\circ\varphi^{-1})'(w) = \frac{\dot\varphi'\circ\varphi^{-1}(w)}{\varphi'\circ\varphi^{-1}(w)}
	\end{equation*}
	and then we take $f(z) = \dot\varphi'(z)/\varphi'(z)$. The result now follows.
	\qed
\end{proof}

We are now ready to derive the Euler-Lagrange equation from the variational principle.
Indeed, we look for at curve $\varphi:[0,1]\to \Con(\set U,\R^2)$ such that
\begin{equation*}
	\frac{\ud}{\ud\eps}\Big|_{\eps=0}\int_0^1
	\frac{1}{2}\inner{\dot\varphi_\eps(t)\circ\varphi_\eps(t)^{-1},
	\dot\varphi_\eps(t)\circ\varphi_\eps(t)^{-1}}_{H^1_\alpha(\varphi(\set U))}
	\uud t = 0
\end{equation*}
for all variations $\varphi_\eps:[0,1]\to \Con(\set U,\R^2)$ such
that $\varphi_\eps(0)= \varphi(0)$, $\varphi_\eps(1) = \varphi(1)$ and
$\varphi_0 = \varphi$. To simplify notation we introduce
$$
\psi = \frac{\ud}{\ud \eps}\Big|_{\eps=0}\varphi_\eps .
$$
Notice that $\psi(0) = \psi(1) = 0$.
Using \autoref{lem:metric_on_U} and the fact that
differentiation commutes with integration we obtain
\begin{equation*}
	\begin{split}
		0 &= \int_0^1 \Big( 
			\inner{\varphi'\dot\varphi,\frac{\ud}{\ud\eps}\Big|_\eps \varphi_\eps' \dot\varphi_\eps}_{L^2(\set U)}
			+ \alpha \inner{\dot\varphi',\frac{\ud}{\ud\eps}\Big|_\eps \dot\varphi_\eps'}_{L^2(\set U)}
		\Big) \ud t \\
		&= \int_0^1 \Big(
			\inner{\varphi' \dot\varphi,\psi'\dot\varphi+\varphi'\dot\psi}_{L^2(\set U)}
			+ \alpha \inner{\dot\varphi',\dot\psi'}_{L^2(\set U)}
		\Big)\ud t \\
		&= \int_0^1 \Big(
			\inner{\varphi' \dot\varphi,\psi'\dot\varphi-\dot\varphi'\psi + \frac{\ud}{\ud t} (\varphi'\psi)}_{L^2(\set U)}
			- \alpha \inner{\ddot\varphi',\psi'}_{L^2(\set U)}
		\Big)\ud t  \\
		&= \int_0^1 \Big(
			-\inner{\frac{\ud}{\ud t}(\varphi' \dot\varphi),\varphi'\psi}_{L^2(\set U)}
			+ \inner{\varphi' \dot\varphi,
			\psi'\dot\varphi-\dot\varphi'\psi}_{L^2(\set U)}
			- \alpha \inner{\ddot\varphi',\psi'}_{L^2(\set U)}
		\Big)\ud t ,
	\end{split}
\end{equation*}
where in the last two equalities we use integration by parts over the time variable 
and the fact that $\psi$ vanishes at the endpoints. Notice that
there are now no time derivatives on~$\psi$. Thus, by the fundamental lemma of 
calculus of variations we can remove the time integration and thereby obtain
a weak equation which must be fulfilled at each point in time. 
In order to
obtain a strong formulation, we need also to isolate $\psi$ from spatial derivatives.
The standard approach of using integration by parts introduces a boundary integral term.
In most examples of calculus of variations, this boundary term
either vanishes (in the case of a space of tangential vector fields),
or it can be treated separately giving rise to natural boundary conditions
(in the case of a space 
where vector fields can have arbitrary small compact support). 
However, in the case
of conformal mappings, there is always a global dependence between
interior points, and points on the boundary (since holomorphic functions cannot have local support).
Hence, we need an appropriate analogue of
integration by parts which avoids boundary integrals.
For this, consider the adjoint operator of complex
differentiation, i.e., an
operator $\pd^{\trans}_{z}:\Xcalcon(\set U)\to\Xcalcon(\set U)$ such that
\begin{equation*}
	\inner{\xi,\eta'}_{L^2(\set U)} = \inner{\pd^{\trans}_{z}\xi,\eta}_{L^2(\set U)}, 
	\qquad \forall\; \xi,\eta\in\Xcalcon(\set U).
\end{equation*}
Notice that $\pd^\trans_z$ depends on the domain~$\set U$.
In the case of the unit disk $\set U = \D$, it holds
that $\pd^\trans_z\xi(z) = \pd_z (z^2 \xi(z)) = 2 z\xi(z) + z^2\xi'(z)$.
In the general case, this operator is more complicated, but can
still be computed under the assumption that a conformal embedding
$\set U \to \D$ is known (see~\cite{MaMcMoPe2011_hodgepreprint}).

Using the operator $\pd^\trans_z$ we can now proceed as follows
\begin{equation*}
	\begin{split}
		0 &= -\inner{\frac{\ud}{\ud t}(\varphi' \dot\varphi),\varphi'\psi}_{L^2(\set U)}
		+ \inner{\varphi' \dot\varphi,
		\psi'\dot\varphi-\dot\varphi'\psi}_{L^2(\set U)}
		- \alpha \inner{\ddot\varphi',\psi'}_{L^2(\set U)}
		\\
		&= 
		-\inner{\abs{\varphi'}^2 \ddot\varphi + \dot\varphi\dot\varphi'\overline{\varphi'},\psi}_{L^2(\set U)}
		-\inner{\dot\varphi\varphi'\overline{\dot\varphi'},\psi}_{L^2(\set U)}
		\\
		&\qquad +\inner{\varphi' \dot\varphi,
		(\psi\dot\varphi)' - \psi\dot\varphi'}_{L^2(\set U)}
		- \alpha \inner{\pd_z^\trans\ddot\varphi',\psi}_{L^2(\set U)}
		\\
		&= 
		- \inner{\abs{\varphi'}^2 \ddot\varphi + \dot\varphi\dot\varphi'\overline{\varphi'}
		+ \dot\varphi\varphi'\overline{\dot\varphi'} + \alpha \pd_z^\trans\ddot\varphi',\psi}_{L^2(\set U)}
		+ \inner{\pd_z^\trans (\varphi' \dot\varphi),
		\psi\dot\varphi}_{L^2(\set U)}
		\\
		&=
		- \inner{(\abs{\varphi'}^2 + \alpha\pd_z^\trans\pd_z) \ddot\varphi + \dot\varphi(\dot\varphi'\overline{\varphi'}
		+ \varphi'\overline{\dot\varphi'}) - 
		\overline{\dot\varphi}\pd_z^\trans (\varphi' \dot\varphi),\psi}_{L^2(\set U)}.
	\end{split}
\end{equation*}
Thus, this relation must hold for all holomorphic functions~$\psi$. However,
the expression in the first slot of the inner product is not holomorphic,
so it needs to be orthogonally projected back to the set of holomorphic functions.
Using Hodge theory for manifolds with boundary, one can show that the orthogonal
complement of $\Xcalcon(\set U)$ in $\Xcal(\set U)$ with respect to
the real inner product $\pair{\cdot,\cdot}_{L^2(\set U)}$ is given by
\begin{equation*}
	\Xcalcon(\set U)^\bot = \{ \xi \in \Xcal(\set U) ; \xi(z) = \pd_x F - \i \pd_y F + \pd_y G + \i \pd_x G, \;
	F,G\in\Fcal_0(\set U) \} ,
\end{equation*}
where $\Fcal_0(\set U) = \{ F\in\Fcal(\set U); F|_{\pd \set U} = 0 \}$ are
the smooth functions that vanish at the boundary.
This result is obtained in~\cite{MaMcMoPe2011_hodgepreprint}.
Since $\Xcalcon(\set U)^\bot$ is invariant under multiplication with~$\i$,
it is also the orthogonal complement with respect to the complex~$L^2$ inner product.

Now we finally arrive at the strong formulation of the Euler-Lagrange equations
\begin{equation}\label{eq:Euler_Lagrange_strong}
	\begin{split}
		\frac{\ud}{\ud t} \big(\Acal(\varphi) \dot\varphi\big)
		%
		-\overline{\dot\varphi}\pd_z^\trans (\varphi' \dot\varphi)
		& = \Acal(\varphi)\big(
			\pd_x F - \i \pd_y F + \pd_y G + \i \pd_x G
		\big),
		\\
		\pd_{\bar z}\varphi &= 0, \\
		F|_{\pd \set U } = G|_{\pd \set U} &= 0 ,
		%
	\end{split}
\end{equation}
where $\Acal(\varphi) = \abs{\varphi'}^2 + \alpha\pd_z^\trans\pd_z$
is the \emph{inertia operator} (self adjoint with respect to the $L^2$ inner product) and
where the second to last equation means that $\varphi$ is
constrained to be holomorphic. Indeed, one may think of equation~\eqref{eq:Euler_Lagrange_strong}
as a Lagrange-D'Alembert equation for a system with configuration space
$\Emb(\set U,\R^2)$ which, by Lagrangian multipliers $(F,G)$,
is constrained to the submanifold $\Con(\set U,\R^2)$.

In the special case $\set U = \D$ we get
\begin{equation}\label{eq:Euler_Lagrange_strong_disk}
	\begin{split}
		\frac{\ud}{\ud t} \big(\Acal(\varphi) \dot\varphi\big)
		-\overline{\dot\varphi}\pd_z (z^2 \varphi' \dot\varphi)
		& = \Acal(\varphi)\big(
			\pd_x F - \i \pd_y F + \pd_y G + \i \pd_x G
		\big),
		\\
		\pd_{\bar z}\varphi &= 0,
		\\
		F|_{\pd \set U } = G|_{\pd \set U} &= 0 .
	\end{split}
\end{equation}

\subsection{Weak Geodesic Equation in the Right Reduced Variable} 
\label{sub:weak_form_of_the_equation_in_right_reduced_variables}

It is also possible to derive the geodesic equation using the right
reduced variable $\xi = \dot\varphi\circ\varphi^{-1}$, as is typically done
for geodesic equations on diffeomorphism groups with invariant metric
(see e.g.~\cite{ArKh1998,KhWe2009,MoPeMaMc2011}). However, there is a difference
between the setting of embeddings and that of diffeomorphism groups, since
$\xi$ is defined on $\varphi(\set U)$, which is not fixed in the embedding
setting. Nevertheless, the ``moving domain'' $\Sigma = \varphi(\set U)$
simply moves along the flow, i.e., points on the boundary follows the flow
of the vector field~$\xi$. For details of this setting in the two cases
of unconstrained embeddings and volume preserving embeddings,
see~\cite{GaMaRa2012}. 

Let $\gamma_\eps$ be a variation of $\gamma$ as above, 
and let $\xi_\eps = \dot\gamma_\eps\circ\gamma_\eps^{-1}$. Using the calculus of Lie derivatives,
direct calculations yield
\[
	\begin{split}
		\frac{\ud}{\ud \eps}\Big|_{\eps=0}\xi_{\eps}
		 &= \dot\eta + \LieD_\eta\xi
		\\
		\frac{\ud}{\ud \eps}\Big|_{\eps=0} \frac{1}{2}\pair{\xi,\xi}_{H^1_\alpha(\gamma_\eps(\set U))}
		&= \pair{\xi,\LieD_\eta\xi}_{L^2(\gamma(\set U))} + 
		\alpha \pair{\xi',\LieD_\eta\xi'}_{L^2(\gamma(\set U))} +
		\\
		& \quad\; \pair{\xi,\divv(\eta)\xi}_{L^2(\gamma(\set U))}
		+ \alpha \pair{\xi',\divv(\eta)\xi'}_{L^2(\gamma(\set U))}
	\end{split}
\]
where $\eta = \frac{\ud}{\ud \eps}\big|_{\eps=0}\varphi_\eps\circ\varphi^{-1}$.
From the variational principle
\begin{equation*}
	\frac{\ud}{\ud\eps}\Big|_{\eps=0}\int_0^1 L(\varphi_\eps,\dot\varphi_\eps) \uud t
	=
	\frac{\ud}{\ud\eps}\Big|_{\eps=0}
	\int_0^1 \frac{1}{2}\pair{\xi_\eps,\xi_\eps}_{H^1_\alpha(\gamma_\eps(\set U))} \uud t = 0
\end{equation*}
we now obtain the weak form of the equation in terms of the variables
$(\varphi,\xi)$ as
\begin{equation*}
	\begin{split}
		\int_0^1  \Big( &
			\pair{\xi,\dot\eta + 2 \LieD_\eta\xi + \divv(\eta)\xi}_{L^2(\gamma(\set U))} + \\
			 & \alpha \pair{\xi',\dot\eta' + \pd_z \LieD_\eta\xi + \LieD_\eta\xi' + \divv(\eta)\xi'}_{L^2(\gamma(\set U))}
		\Big) \uud t = 0 .
	\end{split}
\end{equation*}
Passing now to the complex inner product, we use the formulas
\begin{equation*}
	\begin{split}
		\inner{\xi,\eta}_{L^2(\set U)} &= \pair{\xi,\eta}_{L^2(\set U)} + \i \pair{\xi,\i \eta}_{L^2(\set U)}
		\\
		\mathsf{g}(\xi,\divv(\eta)\xi)+ \i \mathsf{g}(\xi,\divv(\i\eta)\xi) &= 2\xi\overline{\eta'\xi}
		\\
		\LieD_{\i \eta}\xi &= \i \LieD_{\eta}\xi
	\end{split}
\end{equation*}
which yields the weak equation
\begin{multline}\label{eq:weak_eq_reduced_variables_complex}
	\int_0^1 \Big(
		\inner{\xi,\dot\eta + 2\LieD_\eta\xi + 2\eta'\xi}_{L^2(\gamma(\set U))}\\
		+ \alpha\inner{\xi',\dot\eta' + \pd_z(\eta'\xi) + 2\LieD_\eta\xi'}_{L^2(\gamma(\set U))}
	\Big)\uud t = 0 .
\end{multline}
Together with the equation $\dot \varphi = \xi\circ\varphi$, this is thus a weak form of the
equation~\eqref{eq:Euler_Lagrange_strong}, but expressed in the variables $(\varphi,\xi)$.



\section{Totally Geodesic Submanifolds}\label{sec:tg_submanifold}

In this section we investigate special solutions to
equation~\eqref{eq:Euler_Lagrange_strong_disk}. The approach for doing
so is to find a finite dimensional submanifold of $\Con(\D,\R^2)$
such that solutions curves starting and ending on this
submanifold actually lie on the submanifold.

Recall that a submanifold $N\subset M$ of a Riemannian manifold $(M,\mathsf{g})$ is
\emph{totally geodesic with respect to} $(M,\mathsf{g})$ if geodesics 
in $N$ (with respect to $\mathsf{g}$ 
restricted to $N$) are also geodesics in $M$. For a thorough treatment of
totally geodesic subgroups of $\Diff(M)$ with respect to various metrics,
see~\cite{MoPeMaMc2011}.

Consider now the submanifold of linear conformal transformations
\[
\mathrm{Lin}(\D,\R^2) = 
\left\{ \varphi\in\Con(\D,\R^2); 
\varphi(z) = 
c z, c \in \C
\right\}.
\]


\begin{proposition}\label{pro:tg_submanifold1}
	$\mathrm{Lin}(\D,\C)$ is totally geodesic in $\Con(\D,\C)$ with respect to
	the $H^1_\alpha$ metric given by~\eqref{eq:Halpha_metric}.
\end{proposition}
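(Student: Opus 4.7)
The plan is a direct verification. I substitute the ansatz $\varphi(t,z)=c(t)\,z$ with $c:[0,1]\to\C^*$ into the Euler-Lagrange equation~\eqref{eq:Euler_Lagrange_strong_disk} and show that the resulting condition reduces to a single complex ODE for $c(t)$ which coincides with the geodesic equation of the metric on $\mathrm{Lin}(\D,\C)\cong\C^*$ induced from the ambient $H^1_\alpha$ metric. Since geodesics of a two-dimensional Riemannian manifold are uniquely determined by initial position and velocity, this shows that every $\mathrm{Lin}$-geodesic solves~\eqref{eq:Euler_Lagrange_strong_disk}, proving total geodesicity.

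For the computation, $\varphi'=c$ is constant in $z$, so $\dot\varphi=\dot c\,z$, $\ddot\varphi=\ddot c\,z$, $\dot\varphi'=\dot c$, and $\ddot\varphi'=\ddot c$; using the explicit disk formula $\pd_z^\trans\xi(z)=2z\xi(z)+z^2\xi'(z)$ one obtains $\Acal(\varphi)\dot\varphi=(|c|^2+2\alpha)\dot c\,z$ and $\pd_z^\trans(\varphi'\dot\varphi)=3c\dot c\,z^2$. Assembling the left-hand side of~\eqref{eq:Euler_Lagrange_strong_disk} then produces an expression of the form $A(t)\,z+B(t)\,z^2\bar z$ for explicit scalar functions $A,B$ of $c,\dot c,\ddot c$ and their conjugates; the $z^2\bar z$ term is the only non-holomorphic piece. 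Imposing the weak EL condition that this expression lie in $\Xcalcon(\D)^\perp$, and testing against monomials $\psi=z^n$ using the elementary integrals $\int_\D z\,\bar z^n\,\ud A=\frac{\pi}{2}\delta_{n,1}$ and $\int_\D z^2\bar z^{n+1}\,\ud A=\frac{\pi}{3}\delta_{n,1}$, every mode with $n\ne 1$ vanishes identically; the $n=1$ mode gives a single scalar relation $\frac12 A+\frac13 B=0$, i.e., a second-order complex ODE in $c(t)$. Once this relation holds, the residue is genuinely in $\Xcalcon(\D)^\perp$, so the required Lagrange multipliers $F,G\in\Fcal_0(\D)$ are supplied by the Hodge-type decomposition of~\cite{MaMcMoPe2011_hodgepreprint}.

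Independently, \autoref{lem:metric_on_U} applied to the tangent vector $\dot c\,z$ at $\varphi=cz$ yields the induced metric $\pi(|c|^2/2+\alpha)|\ud c|^2$ on $\mathrm{Lin}\cong\C^*$ (a rotationally symmetric conformally Euclidean metric), and the standard Euler-Lagrange computation for this two-dimensional Lagrangian produces exactly the same ODE as the one extracted above, closing the argument. The main bookkeeping obstacle I expect is lining up the coefficients $A,B$ precisely with those of the induced geodesic equation; the rest is mechanical. A more conceptual but less elementary alternative would be to observe that $\mathrm{Lin}(\D,\C)$ is the fixed-point set of the $S^1$-isometry $\varphi\mapsto\e^{-\i\theta}\varphi(\e^{\i\theta}z)$ on $(\Con(\D,\C),H^1_\alpha)$ (isometry follows by a direct change of variables, and the fixed-point characterization comes from comparing Taylor coefficients) and then invoke the principle that fixed-point sets of isometry groups are totally geodesic; but that route requires uniqueness of the geodesic initial-value problem in the Fréchet setting, which the direct verification above entirely avoids.
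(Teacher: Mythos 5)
Your core computational idea --- substitute $\varphi=cz$, observe that the Euler--Lagrange residual has the form $A(t)z+B(t)z^2\bar z$, and use orthogonality of monomials on the disk so that testing against $\psi=z^n$ is vacuous for $n\neq 1$ --- is exactly the mechanism of the paper's proof, which runs the same monomial test on the weak form~\eqref{eq:weak_eq_reduced_variables_complex} in the reduced variables $\xi=az$ with variations $\eta=bz^k$. Your integrals $\int_\D z\bar z^n\uud A=\tfrac{\pi}{2}\delta_{n,1}$ and $\int_\D z^2\bar z^{n+1}\uud A=\tfrac{\pi}{3}\delta_{n,1}$ are correct, as are $\Acal(\varphi)\dot\varphi=(|c|^2+2\alpha)\dot c\,z$, $\pd_z^\trans(\varphi'\dot\varphi)=3c\dot c\,z^2$, and the induced Lagrangian $\tfrac{\pi}{4}(|c|^2+2\alpha)|\dot c|^2$; the logical frame (every geodesic of the induced metric solves the ambient equation) is the right one, and your caution about the fixed-point-set shortcut is well placed.

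The step that will not close is the promised coincidence of the two ODEs. The $n=1$ relation $\tfrac12 A+\tfrac13 B=0$ extracted from~\eqref{eq:Euler_Lagrange_strong_disk} \emph{as printed} is $(|c|^2+2\alpha)\ddot c+\bar c\,\dot c^2-c|\dot c|^2=0$, whereas the Euler--Lagrange equation of $\tfrac{\pi}{4}(|c|^2+2\alpha)|\dot c|^2$ is $\tfrac{\ud}{\ud t}\bigl[(|c|^2+2\alpha)\dot c\bigr]=c|\dot c|^2$, i.e.\ $(|c|^2+2\alpha)\ddot c+\bar c\,\dot c^2=0$. They differ by $c|\dot c|^2$: already for $\alpha=0$ and real $c$ the first gives $\ddot c=0$ while the true geodesics satisfy $\tfrac{\ud^2}{\ud t^2}c^2=0$ (cf.\ \autoref{fig:tg_solutions}). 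The mismatch is traceable to the derivation of~\eqref{eq:Euler_Lagrange_strong}, where replacing $\inner{\varphi'\dot\varphi,(\psi\dot\varphi)'-\psi\dot\varphi'}$ by $\inner{\pd_z^\trans(\varphi'\dot\varphi),\psi\dot\varphi}$ silently discards $-\inner{\dot\varphi\varphi'\overline{\dot\varphi'},\psi}$, so the strong form should carry $2\dot\varphi\varphi'\overline{\dot\varphi'}$ rather than the single copy packaged inside $\tfrac{\ud}{\ud t}(\Acal(\varphi)\dot\varphi)$. Consequently your plan, taken literally, terminates in a contradiction at exactly the ``bookkeeping'' step you flag. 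The robust repair is to apply your monomial test directly to the first variation $\int_0^1\bigl(\inner{\varphi'\dot\varphi,\psi'\dot\varphi+\varphi'\dot\psi}+\alpha\inner{\dot\varphi',\dot\psi'}\bigr)\uud t$ (equivalently to~\eqref{eq:weak_eq_reduced_variables_complex}): the $n\neq1$ modes vanish by the same orthogonality, and the $n=1$ mode \emph{is} the induced geodesic equation by construction, so no separate matching is needed. A smaller imprecision: the right-hand side of~\eqref{eq:Euler_Lagrange_strong_disk} is $\Acal(\varphi)$ applied to an element of $\Xcalcon(\D)^\perp$, which is not the same as the residual itself lying in $\Xcalcon(\D)^\perp$; your reading (residual orthogonal to all holomorphic $\psi$) is the correct weak condition, but it is not literally what the displayed equation states.
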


\begin{proof}
	If $t\mapsto \varphi(t)$ is a path in $\mathrm{Lin}(\D,\C)$,
	i.e., $\varphi(z) = c z$ with $c \in\C$,
	then $\xi=\dot\varphi\circ\varphi^{-1}$ is of the form $\xi(z) = a z$
	with $a\in\C$. Now, let $t\mapsto (\varphi,\xi)$
	fulfill the variational equation~\eqref{eq:weak_eq_reduced_variables_complex} 
	for each variation
	of the form $\eta(z) = b z$ with $b\in\C$.
	We need to show that $t\mapsto (\varphi,\xi)$ then fulfills the
	equation for any variation of the form $\eta(z) = z^k$ (since the monomials span
	the space of holomorphic functions).
	Thus, for the first term in~\eqref{eq:weak_eq_reduced_variables_complex} we get
	\begin{equation}\label{eq:tg_weak_form}
		\begin{split}
			\inner{\xi,\dot\eta + 2\LieD_\eta\xi + 2\eta'\xi}_{L^2(\varphi(\D))}
			&=
			\inner{\xi,\dot\eta + 4\eta'\xi-2\xi'\eta}_{L^2(\varphi(\D))}
			\\
			&= \inner{\varphi'\cdot\xi\circ\varphi),\varphi'\cdot(\dot\eta +
			 4\eta'\xi-2\xi'\eta)\circ\varphi}_{L^2(\D)}
			\\
			&= \abs{c}^4\inner{a z,\dot b z^k + 4 k b a z^k - 2 a b z^{k}}_{L^2(\D)} .
		\end{split}
	\end{equation}
	where in the first line we use the conformal change of variables formula
	for integrals. Now, since the monomials are orthogonal with respect to
	$\inner{\cdot,\cdot}_\D$ the expression vanish whenever~$k\neq 1$.
	For the second term, notice that
	$\xi' = a$ is constant. Now, if $\eta$
	is constant, then all the terms $\dot\eta',\eta'\xi,\LieD_\eta\xi'$
	vanish. If $\eta=z^{k}$ with $k\geq 2$, then
	the second term $\alpha\inner{\xi',\dot\eta'+ \pd_z(\xi\eta') + 2\LieD_\eta\xi'}$ 
	is of the form
	\begin{equation*}
		\alpha \int_{c\D} a\overline{(\dot b + 3 ab) k z^{k-1}} \uud A(z)
		= \alpha \abs{c}^2 a \overline{(\dot b + 3 ab)} \int_\D k z^{k-1}
	\end{equation*}
	which vanishes for every $a,c_1\in\C$.
	This concludes the proof.
	\qed
\end{proof}

We now derive a differential equation
for the totally geodesic solutions in $\mathrm{Lin}(\D,\C)$
in term of the variables $(c,a)$ corresponding to 
$\varphi(z)= c z$ and $\xi(z) = a z$. From $\dot\varphi = \xi\circ\varphi$
it follows that $\dot c = a c$. Next, we plug the ansatz
into the weak equation~\eqref{eq:weak_eq_reduced_variables_complex},
and use that $b$ vanish at the endpoints,
which yields the equations
\begin{equation}\label{eq:tg_strong_eq}
	\begin{split}
		\dot c &= a c \\
		\dot a (2 c + \alpha) &= -4a^2 c - \alpha a^2
	\end{split}
\end{equation}
These equations thus gives special solutions to equation~\eqref{eq:Euler_Lagrange_strong_disk}.
We obtain that $\frac{\ud^2}{\ud t^2}(c^2 + \alpha c) = 0$, so we can analytically
compute these special solutions.
%
%
%
%
Notice that if
$a$ and $c$ are initially real, then both $a$ and $c$ stay real, 
so the even smaller submanifold of pure scalings
is also totally geodesic.


\autoref{fig:tg_solutions} gives a visualization of total geodesic solutions 
where the two end transformations are given first by a pure scaling and
second by a pure rotation. Notice that within the submanifold
$\mathrm{Lin}(\D,\C)$, the smaller submanifold of scalings is totally geodesic,
as is shown in the left figure.
However, the submanifold of rotations is not, as is shown in the right figure.


\begin{figure}
	\centering
		\includegraphics[height=0.17\textwidth]{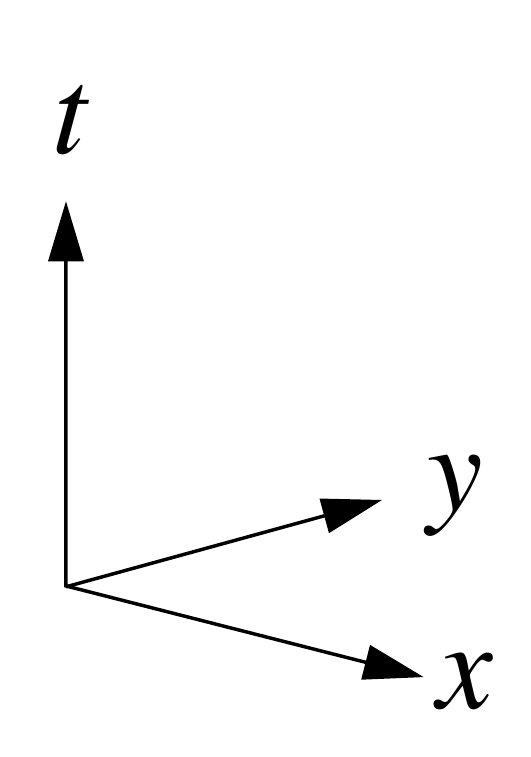}
		\quad
		\includegraphics[height=0.4\textwidth]{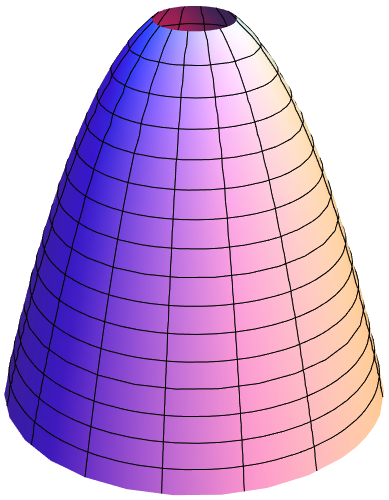}
		\qquad
		\includegraphics[height=0.4\textwidth]{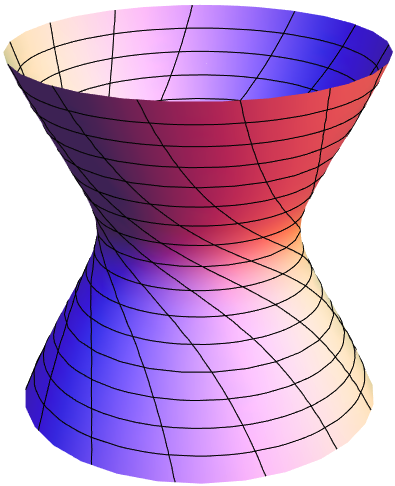}
		\\[2ex]
		\hspace{0.18\textwidth} $c = 0.2$ \hspace{0.27\textwidth} $c=\e^{1.6 \pi \i}$
	\caption{Geodesic curve from $\varphi_0(z)=z$ to $\varphi_1(z)=c z$ 
	for different values of $c$ and $\alpha=0$. 
	The mesh lines show how the unit circle evolves. Notice that the scaling geodesic
	stays a scaling (left figure), whereas the rotation geodesic picks up some scaling
	during its time evolution (right figure).
	}
	\label{fig:tg_solutions}
\end{figure}

\begin{remark}
	By using again the weak form~\eqref{eq:weak_eq_reduced_variables_complex} of the governing
	equation one can further show that the submanifold of translations
	is \emph{not} totally geodesic in $\Con(\D,\C)$.
	Nor is the submanifold 
	of affine conformal
	transformations.
\end{remark}

\section{Numerical Discretization} 
\label{sec:numerical_discretization}

In this section we describe a method for numerical discretization of
the equations~\eqref{eq:Euler_Lagrange_strong_disk}. The basic idea is to obtain
a spatial discretization of the phase space variables $(\varphi,\dot\varphi)\in T\Con(\D,\R^{2})$
by truncation of the Taylor series. Thus, we use a Galerkin type
approach for spatial discretization. For time discretization we take
a variational approach, using the framework of discrete mechanics
(cf.~\cite{MaWe2001}).

The discrete configuration space is given by
\begin{equation*}
	Q_{n} = \{ \varphi \in \Con(\D,\R^{2}) ; \varphi(z) = \sum_{i=0}^{n-1} 
	c_{i} z^{i}, c_{i}\in\C \} .
\end{equation*}
Notice that $Q_{n}$ is an $n$--dimensional submanifold of $\Con(\D,\R^{2})$.
Since $\Con(\D,\R^{2})$ is a open subset of the vector space of
all holomorphic maps on $\D$ (in the Fréchet topology), it holds that
the discrete configuration space~$Q_{n}$
is an open subset of~$\mathrm{span}_\C\{ 1,z,\ldots,z^{n-1} \} \simeq \C^{n}$. 
Thus, each tangent space $T_{\varphi}Q_{n}$
is identified with~$\C^{n}$ by taking the coefficients 
of the finite Taylor series.
Together with the restricted~$H_{\alpha}^{1}$ metric,
$Q_{n}$ is a Riemannian manifold.

Let $U,V\in T_{\varphi}\Con(\D,\R^2)$, and let
$(a_k)_{k=0}^\infty,(b_k)_{k=0}^\infty$ 
respectively be their Taylor coefficients.
Then it holds that
\begin{equation*}
	\inner{U,V}_{L^2(\D)}
	= \frac{1}{\pi}\sum_{k=0}^{\infty}
		(1 + i) a_{i}\overline{b_{i}} \; ,
\end{equation*}
which follows since $\inner{z^{i},z^{j}}_{L^{2}(\D)} = \delta_{ij} (i+1)/\pi$.

The next step is to obtain
a numerical method that approximates the geodesics.
For this we use the variational method obtained by the
discrete Lagrangian on $Q_{n}\times Q_{n}$ given by
\begin{equation*}
	\begin{split}
		L_{d}(\varphi_{k},\varphi_{k+1}) &= h L \big( 
			\underbrace{\frac{\varphi_{k}+\varphi_{k+1}}{2}}_{\varphi_{k+1/2}},
			\frac{\varphi_{k+1}-\varphi_{k}}{h}
			\big)
		 \\ &=
		\frac{1}{2 h}\inner{
			\varphi_{k+1/2}'(\varphi_{k+1}-\varphi_{k}),
			\varphi_{k+1/2}'(\varphi_{k+1}-\varphi_{k})
		}_{L^2(\D)} +
		\\
		& \quad\;
		\frac{\alpha}{2 h}\inner{
			\varphi_{k+1}'-\varphi_{k}',
			\varphi_{k+1}'-\varphi_{k}'
		}_{L^2(\D)} ,
	\end{split}
\end{equation*}
where $h>0$ is the step size.
The discrete action is thus given by
\begin{equation}\label{eq:discrete_action}
	A_{d}(\varphi_{0},\ldots,\varphi_{N}) = \sum_{k=0}^{N-1}L_{d}(\varphi_{k},\varphi_{k+1}) .
\end{equation}
Now, a method for numerical computation of geodesics originating from the identity and ending at a known configuration is obtained as follows.

\begin{algorithm}
	Given $\varphi\in \Con(\D,\R^{2})$, an approximation to the geodesic curve from the identity element $\varphi_{0}(z) = z$ to $\varphi$ is given by the following algorithm:
	\begin{enumerate}
		\item Set $\varphi_N = \mathcal{P}_n\varphi$, where $\mathcal{P}_n:\Con(\D,\R^2)\to Q_n$
		is projection by truncation of Taylor series.
		\item Set initial guess $\varphi_k = (1-k/N)\varphi_0 + k/N\varphi_N$ for $k=1,\ldots,N-1$.
		\item Solve the minimization problem
		\begin{equation*}
			\min_{\varphi_1,\ldots,\varphi_{N-1}\in Q_n} A_d(\varphi_0,\ldots,\varphi_N)
		\end{equation*}
		with a numerical non-linear numerical minimization algorithm.
	\end{enumerate}
\end{algorithm}

\begin{remark}
	In practical computations we use $\C^n$ instead of $Q_n$.
	Thus, as a last step one have to check that the solution obtained fulfils that $\varphi_k\in Q_n$, i.e., $\varphi_k'(z)\neq 0$ for~$z\in\D$.
	For short enough geodesics, this is guaranteed by the fact that $\varphi_0'(z) = 1$.
\end{remark}

\begin{remark}
	Notice that we solve the problem as a two point boundary value problem. 
	Thus, we assume that the final state $\varphi_N$ is known. 
	In future work we will consider the more general optimal control problem, where the final configuration is determined by minimimizing a functional, such as sum-of-squares of the pixel difference between the destination and target image.
\end{remark}

\subsection{Efficient Evaluation of the Discrete Action} 
\label{sub:efficient_computations_using_fft}

Evaluation of the discrete action functional~\eqref{eq:discrete_action}
requires computation of the Taylor coefficients of
the product~$\varphi_{k+1/2}'(\varphi_{k+1}-\varphi_k)$.
The ``brute force'' algorithm for doing this requires
$\mathcal{O}(n^2)$ operations. However, it is well known
that FFT techniques can be used to accelerate such computations.
Using this, we now give an $\mathcal{O}(N n \log n)$ algorithm for evaluation
of the discrete action.

\begin{algorithm}
	Given $\varphi_0,\ldots, \varphi_N$, an efficient algorithm
	for computing the discrete action $A_d(\varphi_0,\ldots,\varphi_N)$
	is given by:
	\begin{enumerate}
		\item For each $k=0,\ldots,N$, compute the Taylor coefficients
		of $\varphi_k'$. This requires $\mathcal{O}(Nn)$ operations.
		\item Compute
		\begin{equation*}
			A_1 = \sum_{k=0}^{N-1} \frac{\alpha}{2 h}\inner{
				\varphi_{k+1}'-\varphi_{k}',
				\varphi_{k+1}'-\varphi_{k}'
			}_{L^2(\D)} .
		\end{equation*}
		This requires $\mathcal{O}(N n)$ operations.
		\item Set $\vect{a}_k \in \C^{2n}$ to contain the Taylor
		coefficients of $\varphi_{k+1/2}'$ as its first $n-1$
		elements, and then zero padded.
		This requires $\mathcal{O}(N n)$ operations.
		\item Set $\vect{b}_k \in \C^{2n}$ to contain the Taylor
		coefficients of $(\varphi_{k+1}-\varphi_k)/h$ as its first $n$
		elements, and then zero padded.
		This requires $\mathcal{O}(N n)$ operations.
		\item Compute
		\begin{equation*}
			\hat{\vect{a}}_k = \mathrm{FFT}(\vect{a}_k),
			\quad
			\hat{\vect{b}}_k = \mathrm{FFT}(\vect{b}_k).
		\end{equation*}
		This requires $\mathcal{O}(N n \log n)$ operations.
		\item Compute component-wise multiplication
		\begin{equation*}
			\hat{\vect{c}}_k = \hat{\vect{a}}_k \hat{\vect{b}}_k .
		\end{equation*}
		This requires $\mathcal{O}(N n)$ operations.
		\item Compute
		\begin{equation*}
			\vect{c}_k = \mathrm{IFFT}(\hat{\vect{c}}_k) .
		\end{equation*}
		This requires $\mathcal{O}(N n \log n)$ operations.
		\item Let $\phi_k$ be the polynomial with Taylor coefficients
		given by $\vect{c}_k$. Then compute
		\begin{equation*}
			A_2 = \frac{h}{2}\sum_{k=0}^{N-1} \inner{\phi_k,\phi_k}_{L^2(\D)} .
		\end{equation*}
		This requires $\mathcal{O}(N n)$ operations.
	\end{enumerate}
	Finally, the discrete action is now given by
	$
		A_d(\varphi_0,\ldots,\varphi_N) = A_1 + A_2 
	$.
\end{algorithm}



\section{Experimental Results}
\label{sec:results}

In this section we use the numerical method developed in the previous
section to confirm that the discrete Lagrangian method is able to calculate
geodesics (solutions to~\eqref{eq:Euler_Lagrange_strong_disk}) with moderately large deformations. 
The purpose of these experiments is to demonstrate our numerical method: we leave further investigations of conformal
image registration for another paper. In all the examples we compute geodesics starting from the identity
$\varphi_0(z) = z$ and ending at some polynomial $\varphi_1$
(of relatively low order). We consider both simple linear
and heavily non-linear warpings. The simulations are carried
out with two different values of the parameter~$\alpha$ to illustrate how the 
geodesics depend on the metric.
The data is given in \autoref{tab:data}.

\autoref{fig:example12} shows the geodesics corresponding to scaling and rotation.
Notice that the geodesics stay in the submanifold $\mathrm{Lin}(\D,\R^2)$, as predicted
by \autoref{pro:tg_submanifold1}. Also notice the difference between
large and small $\alpha$. For small $\alpha$, the scaling coefficient
behaves (almost) like $\frac{\ud^2}{\ud t^2}c_1^2 = 0$, which is the solution
of equation~\eqref{eq:tg_strong_eq} with $\alpha=0$, while the scaling coefficient behaves (almost) like $\frac{\ud^2}{\ud t^2}c_1 = 0$,
which is the asymptotic solution of equation~\eqref{eq:tg_strong_eq} as $\alpha\to\infty$.

\autoref{fig:example34} shows the geodesics corresponding
to various non-linear transformations. Although the differences in the geodesic paths
for different values of $\alpha$ are small, we notice that for higher values of $\alpha$
the geodesic are ``more regular'' at the cost of occupying more volume.
This is especially clear in Example~3, where, halfway thorough the geodesic, 
the ``bump'' on the right of the shape behaves differently for the two values of~$\alpha$.

We anticipate that the method will allow us to study the metric geometry of the conformal embeddings as was done by Michor and Mumford~\cite{MiMu2006} for metrics on planar curves and to determine, for example, which conformal embeddings are markedly closer in one metric than another, and how the geodesic paths differ between different metrics and between different groups, for example, by passing to a smaller group (e.g. the M\"obius group) or to a larger one (e.g. the full diffeomorphism pseudogroup for embeddings). 
\begin{table}
	\begin{center}
		\begin{tabular}{p{0.23\textwidth}p{0.37\textwidth}p{0.23\textwidth}p{0.13\textwidth}}
			\toprule 
			\bf Annotation & \bf Coefficients & \bf Choice of $\alpha$  & \bf Type \\
			\midrule
				Example 1 (a,b) 
			& 
				$c_0 = 0$ \par
				$c_1 = 0.5$
			& 
				$\alpha=0.1$ in (a) 
				\par 
				$\alpha=100$ in (b)
			&
				Scaling
			\\
			\midrule
				Example 2 (a,b) 
			& 
				$c_0 = 0$ \par
				$c_1 = \exp(0.4 \pi \i)$
			& 
				$\alpha=0.1$ in (a) 
				\par 
				$\alpha=100$ in (b)
			&
				Rotation
			\\
			\midrule
				Example 4 (a,b) 
			& 
				$c_0 = 0.0185475$
				\par
				$c_1 = 0.8034225$
				\par
				$c_2 = -0.13933275$
				\par
				$c_3 = -0.23849625$
				\par
				$c_4 = -0.18597975$
				\par
				$c_5 = -0.0125472$
				\par
				$c_6 = 0.18020775$
				\par
				$c_7 = 0.27937125$
			& 
				$\alpha=0.1$ in (a) 
				\par 
				$\alpha=10$ in (b)
			&
				Non-linear
			\\
			\midrule
				Example 5 (a,b) 
			& 
			$c_0 = 0.00674 + 0.053125\i$
			\par
			$c_1 = 0.77654 + 0.103125\i$
			\par
			$c_2 = 0.109424 + 0.103125\i$
			\par
			$c_3 = -0.052777 + 0.103125\i$
			\par
			$c_4 = -0.115049 + 0.103125\i$
			\par
			$c_5 = -0.0409141 + 0.103125\i$
			\par
			$c_6 = 0.126201 + 0.103125\i$
			\par
			$c_7 = 0.288402 + 0.103125\i$
			& 
				$\alpha=0.1$ in (a) 
				\par 
				$\alpha=10$ in (b)
			&
				Non-linear
			\\
			\bottomrule
		\end{tabular}
	\end{center}
	\caption{Data used in the various examples. 
	The polynomial for the final conformal mapping 
	is of the form $\varphi_1(z) = \sum_{k=0}^{n-1} c_k z^k$
	with $n=16$. Coefficients not listed are zero.
	In all examples,
	we used $N=20$ discretisation points in time.
	}
	\label{tab:data}
\end{table}

\begin{figure}
	\centering
		\textbf{Example 1} \\ (a) \\
		\includegraphics[width=1.0\textwidth]{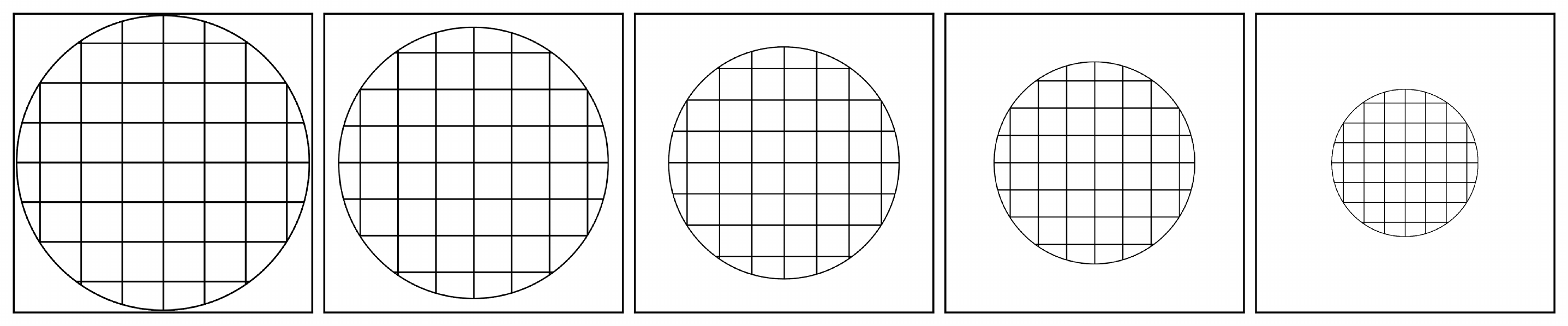}
		\\ (b) \\
		\includegraphics[width=1.0\textwidth]{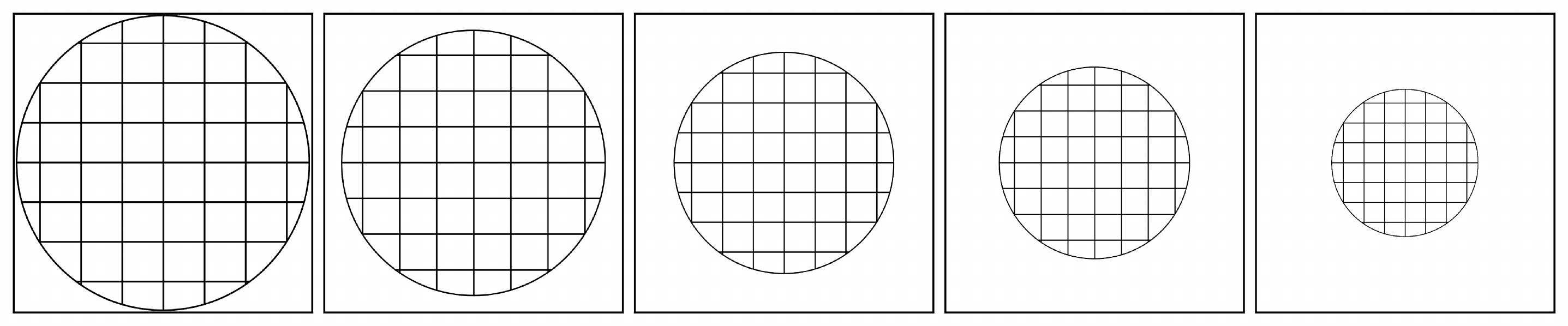}
		\\[3ex]
		\textbf{Example 2} \\ (a) \\
		\includegraphics[width=1.0\textwidth]{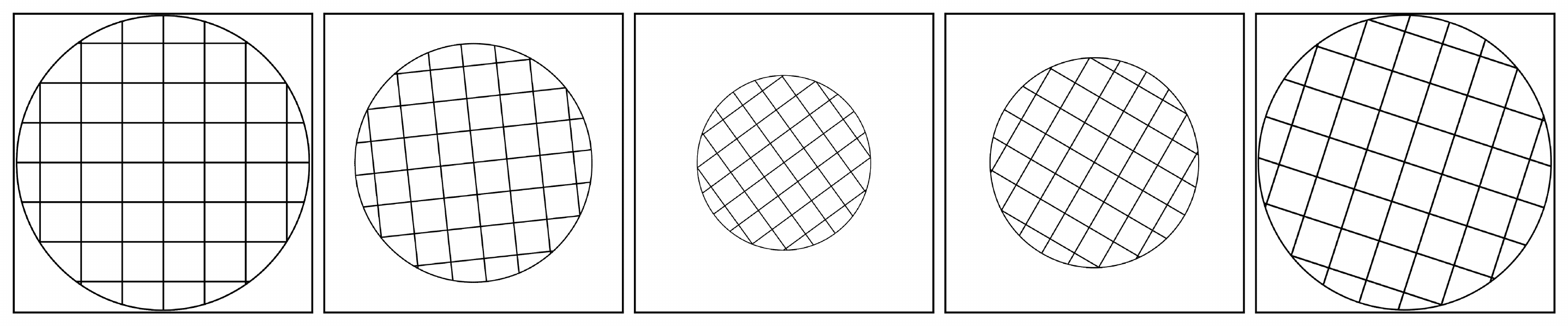}
		\\ (b) \\
		\includegraphics[width=1.0\textwidth]{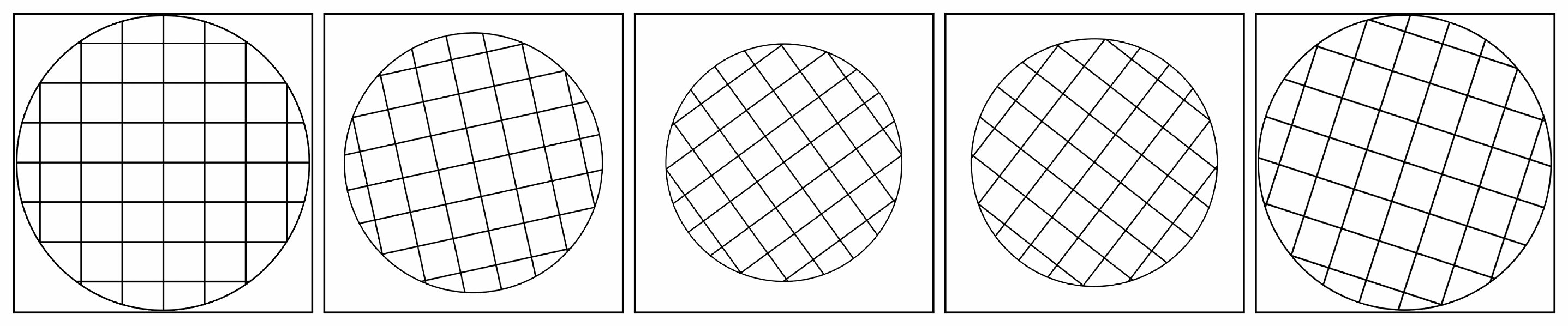}
	\caption{In Example 1, geodesics in the $H^1_\alpha$ metric connecting the identity $z\mapsto z$ to $z\mapsto 0.5z$ are calculated using the discrete Lagrangian method. In (a), $\alpha= 0.1$ and in (b), $\alpha =100$. Both geodesics coincide with the analytic solution to equation~\eqref{eq:tg_strong_eq}. In Example 2, the geodesic connecting $z\mapsto z$ to $z\mapsto \e^{0.4\pi \i}z$ is calculated, again matching the analytic solution perfectly, illustrating the effect of $\alpha$.
	\label{fig:example12}}
\end{figure}

\begin{figure}
	\centering
		\textbf{Example 3} \\ (a) \\
		\includegraphics[width=1.0\textwidth]{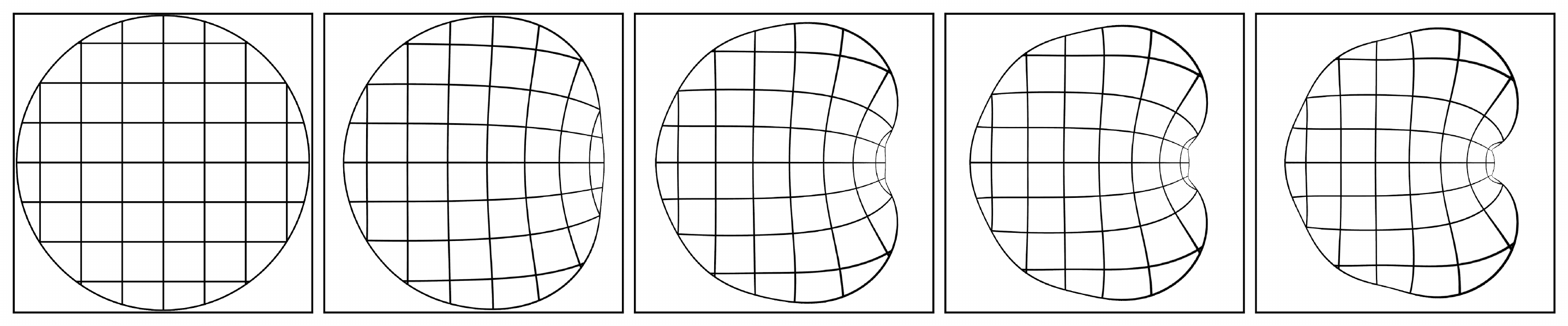}
		\\ (b) \\
		\includegraphics[width=1.0\textwidth]{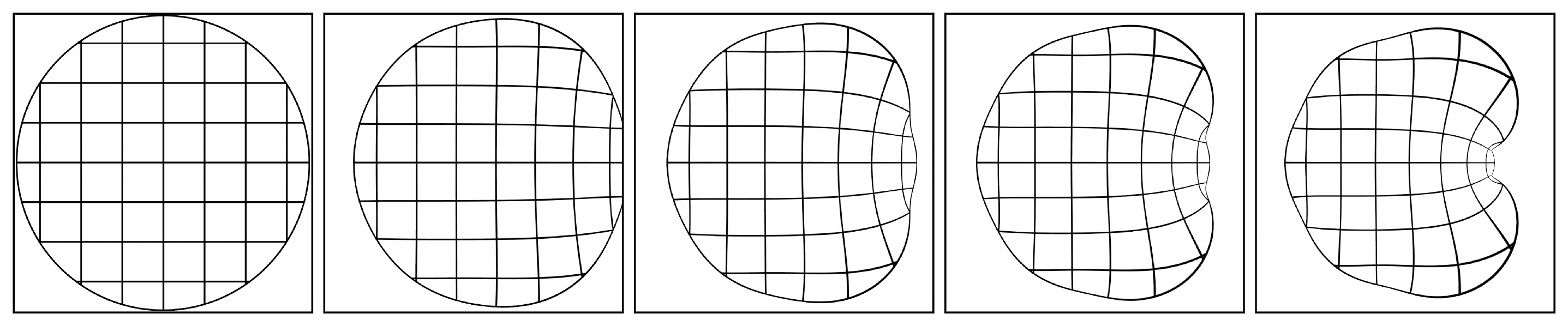}
		\\[3ex]
		\textbf{Example 4} \\ (a) \\
		\includegraphics[width=1.0\textwidth]{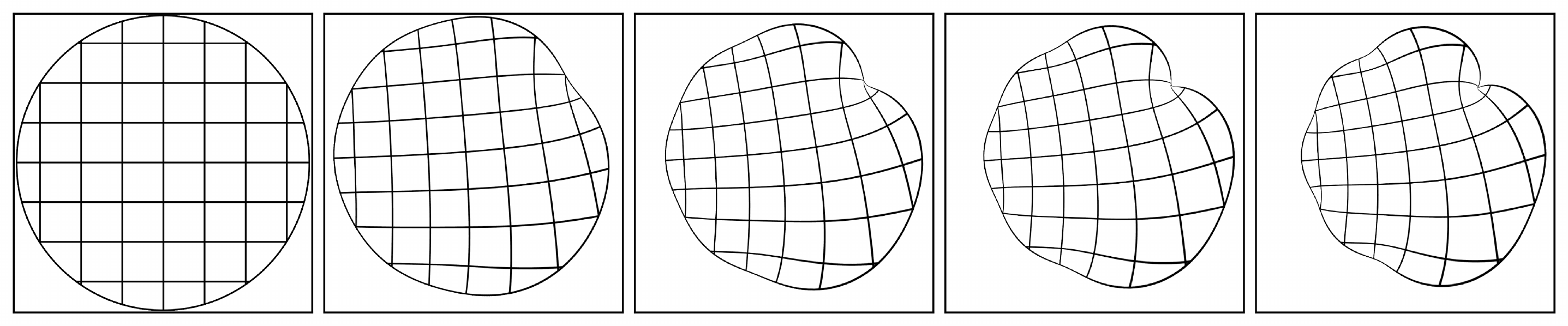}
		\\ (b) \\
		\includegraphics[width=1.0\textwidth]{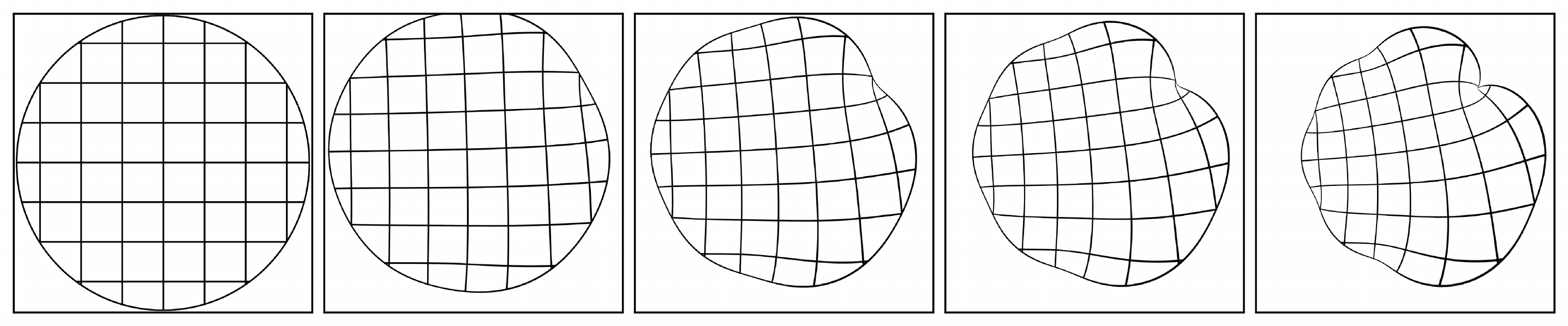}
	\caption{Examples 3 and 4 illustrate two geodesics in the $H^1_\alpha$ metric on conformal embeddings calculated using the discrete Lagrangian method. In (a), $\alpha = 0.1$ and in (b), $\alpha = 100$. 
	In both examples, the target diffeomorphism $\varphi_1$ has been chosen to be highly non-linear (see \autoref{tab:data} for the exact data used). 
	Little difference is visible to the eye between the two values of $\alpha$: in Example 3, a small bump on the right side of the boundary behaves differently.
	\label{fig:example34}}
\end{figure}

\section{Conclusions}

Motivated by the preference of Thompson \cite{Th1942} for `simple' warps in his examples of how images of one species can be deformed into those of a related species, we have derived the geodesic equation for planar conformal diffeomorphisms using the $H^1_{\alpha}$ metric. We have chosen conformal warps as they were used by Thompson, and are very simple. Of course, the animals that Thompson was interested in are actually three-dimensional, and for any number of dimensions bigger than 2 the set of conformal warps is rather restricted. However, our intention is to start with conformal warps and to continue to build progressively more complex sets of deformations, rather than working always in the full diffeomorphism group as is conventional.

The conformal warps admit the rigid transformations of rotation and translation as special cases, and we have shown that these linear conformal transformations are totally geodesic in the conformal warps with respect to the $H^1_{\alpha}$ metric that we have considered. 

We have also provided a numerical discretization of the  geodesic equation, and used it to demonstrate the effects of the parameters of the conformal warps. In future work we will use the algorithm that we have developed here to perform image registration based on conformal warps using the LDDMM framework and apply it to images such as real examples of those drawn by Thompson. 

\section*{Acknowledgements}

This work was funded by the Royal Society of New Zealand Marsden Fund and the Massey University Postdoctoral Fellowship Fund.
The authors would like to thank the reviewers for helpful comments and suggestions.


{
\bibliographystyle{splncs}

\bibliography{../Papers/References}
}

\end{document}